\documentclass[12pt]{article}
\usepackage{geometry}
\usepackage{amsmath, amssymb, amsthm}
\usepackage{float}
\usepackage{natbib}
\usepackage{graphicx}
\usepackage[font=footnotesize]{caption}
\usepackage{subfigure} 

\usepackage{natbib}
\usepackage{booktabs}
\usepackage[utf8]{inputenc}

\usepackage{amsmath}   % needed for the "align" environment
\usepackage{amssymb}   % often helpful for math symbols
\usepackage{array}  
\usepackage{hyperref}

\usepackage{booktabs}  % optional, if you want nicer table lines
\usepackage{lipsum}    % just for dummy text, optional
\usepackage{array}       % Improves table formatting (for m{} column alignment)
\usepackage{multirow}    % Allows merging of table cells (for \multirow)
\usepackage{makecell}    % Enables line breaks within cells (for \makecell)
\usepackage{amsmath}     % Provides advanced math formatting (for aligned equations)
\usepackage{nicematrix}

\usepackage{algorithm}
\usepackage{algpseudocode}

\newtheorem{theorem}{Theorem}[section] % Define theorem 
\newtheorem{lemma}{Lemma}[section] % Define theorem environment
\newtheorem{definition}{Definition}[section]

\usepackage{media9}
\usepackage{multimedia}
\usepackage{amsthm}

\title{A graph based advection framework for climate-driven species distribution} 
  
\author{
Pranali Roy Chowdhury$^1$ \thanks{ Pranali Roy Chowdhury (\texttt{pranalir@iisc.ac.in})} \and
Soumyendu Raha$^1$ \thanks{Soumyendu Raha (\texttt{raha@iisc.ac.in})}
}

\date{}

\begin{document}

\maketitle
\noindent{}  $^1$Department of Computational \& Data Sciences, Indian Institute of Science Bengaluru, India

\bigskip

\begin{center}
{\bf Abstract}
\end{center}
Climate change is reshaping species interactions and movement across fragmented landscapes. Despite this, most mathematical models assume random diffusion, overlooking the influence of directed movement. Here, we develop a graph based reaction-diffusion-advection framework explicitly incorporating directional movement induced by environmental gradients. Our results show while diffusion promotes overall population persistence across the network, advective movement induces asymmetric flows. It create population hotspots by directing individuals toward optimal niches, often associated with nodes of high in-degree. We demonstrate the interplay between advection strength and network topology in determining species persistence. Strong advection increase local extinction risk by accumulating populations toward favorable nodes. Additionally, loss of ecological corridors can disrupt directed flow within the network, thereby restricting species from favorable patches. We found that this disruption might not cause immediate extinction, rather forcing species to spread to the suboptimal patches.  Our advection framework therefore efficiently captures how directional movement interacting with network topology governs species redistribution, hotspot formation, and predict extinction risk under environmental change.

\vspace{1.0cm}
\noindent
{\bf Keywords:} Advection; Thermal niche; Climate warming; Directed ecological network; Population persistence;

\maketitle
\newpage

\section{Introduction}
% General introduction on climate change and examples of species movement due to climate change. Fragmented landscapes,..\\
% existing models- continuous, in networks how movement was studied. Why advection is missing. what is the motivation to consider advection.\\

% The role of climate-driven directional movement on ecological networks remains poorly understood.\\

%%%%%%%%%
Rising global temperatures, shifting precipitation patterns, and increasing frequency of extreme events are rapidly altering environmental conditions across the globe \cite{pinsky2019greater,deutsch2008impacts,urban2024climate}. These changes are not uniform over space, resulting in heterogeneous environmental conditions that vary across landscapes. These long-term alterations has impact on every possible living being on Earth. For instance, rising temperatures drive poleward and elevational range shifts \cite{chen2011rapid,pecl2017biodiversity,lenoir2020species}, altered precipitation regimes affect the survival, reproduction and influence their behaviour \cite{deguines2017precipitation}, and extreme events such as heatwaves and storms can cause abrupt population declines and local extinctions \cite{grant2017evolution,ummenhofer2017extreme}. The accelerated pace of climate change therefore is forcing the species to either adapt, or track the shifting climate or face extinction\cite{parmesan2003globally}. 

Dispersal of species is fundamental in determining biodiversity responses to rapid climate change \cite{travis2013dispersal,urban2016improving}. To persist, species often must track suitable climates into new regions through dispersal, colonization, and subsequent range shifts \cite{anderson2009dynamics}. Most modelling techniques developed assume that all species disperse randomly across any landscape. In particular, natural landscapes are increasingly fragmented due to both anthropogenic activities and environmental heterogeneity, restricting dispersal pathways and altering connectivity between habitat patches \cite{arancibia2022network}. Understanding how species redistribute across such fragmented systems under changing climatic conditions remains a fundamental challenge in ecology.  To bridge this gap, we develop a graph-based reaction-diffusion-advection model to investigate the interplay between network topology and climate-driven migration.

Temperature is a key environmental driver of species dispersal \cite{travis2013dispersal}. Despite extensive modelling of species movement in both continuous and discrete space setting \cite{cantrell2004spatial,cosner2014reaction,cantrell1991effects,neubert1995dispersal}, temperature-dependent movement remain largely underexplored with a few exceptions \cite{dornelas2024movement,maciel2013individual}.
 While diffusion-based ecological models provide a tractable and analytically rich framework, they implicitly assume unbaised movement and independent of environmental conditions \cite{mcpeek1992evolution,cantrell2006movement}. In contrast, empirical evidence suggests that many species exhibit directed movement in response to environmental gradients such as temperature, resource availability, or habitat quality, a behavior often described as taxis or advective movement \cite{chen2011rapid,parmesan2003globally}. For instance, laboratory and field studies have demonstrated thermotactic movement in ectotherms, where individuals actively track optimal temperature ranges \cite{angilletta2009thermal}, while large-scale observational studies reveal climate-driven range shifts in terrestrial and marine species consistent with directional dispersal toward favorable conditions\cite{chen2011rapid,pecl2017biodiversity,turchin1998quantitative} In continuous spatial models, this is typically incorporated as a drift term in advection–diffusion equations \cite{belgacem1995effects,cosner2003does,potapov2004climate,urban2012collision}. 

While most studies of directed movement are formulated in continuous spatial settings, real-world landscapes are inherently discrete and fragmented. Building on this need, network science provides a powerful framework to analyze ecological systems in heterogeneous and fragmented landscapes, where habitats are represented as nodes and links capture dispersal, interactions, or environmental connectivity \cite{dale2017applying}. It has been widely applied to study food webs, dispersal and migration, invasions, and disease spread, as well as species persistence under environmental gradients and climate warming \cite{fortin2021network,tylianakis2017ecological,bascompte2003nested,chen2023evolution,bdolach2025tipping,bhandary2023rising,hastings2026tipping}. Recent efforts further highlight that integrating diffusion-advection dynamics within network frameworks can improve predictions of species space use and persistence in changing landscapes \cite{prima2018combining}.

Despite its ecological relevance, the incorporation of advection into discrete or network-based models remains limited, particularly in fragmented landscapes where movement pathways are inherently structured. In this context, we develop a mechanistic framework in which advection emerges explicitly from temperature gradients, giving rise to directed edges in the underlying network. This formulation directly links dispersal to temperature-driven cues, rather than treating advection as a temperature dependent parameter. While temperature is widely recognized as a key driver of population dynamics, empirical evidence linking temperature directly to dispersal behavior remains limited \cite{amarasekare2024temperature}. To address this gap, we propose a framework in which movement is governed by thermal niche requirements and environmental sensing. As a baseline, we analyze a single-species model with temperature-dependent growth, providing a tractable setting to isolate the effects of climate-driven movement. 

By integrating a graph theoretical representation of landscapes with environmentally driven movement, our mechanistic formulation aim to capture directed dispersal of species along environmental gradients. Specifically, our work is motivated from the following research questions: How do environmental gradients shape the direction of movement and population distributions in fragmented landscapes; what is the role of network connectivity in driving the directional movement; How does the loss of ecological corridors alter the directed flow in the network and its affect on spatial distribution.  Our framework demonstrates how directed movement can be systematically constructed from environmental gradients using principles from discrete vector calculus. It enables us to quantify how the interplay between dispersal and warming influences population persistence, the emergence of population hotspots, and species redistribution under changing climatic conditions.

\section{Mathematical framework}
Due to anthropogenic activities, including agriculture and infrastructure development, large, unbroken tracts of forest areas have been broken into smaller, isolated fragments, which forms the patched ecosystems \cite{hanski1998metapopulation}. Under climate variability and environmental gradients, characterizing the heterogeneity among these patches becomes essential. Furthermore, understanding how this heterogeneity affects overall ecosystem resilience is a critical challenge. In such fragmented systems, dispersal of the species depends heavily on the connectivity of the patches, and species movement often exhibits a directional bias toward favorable conditions (e.g., cooler habitats or optimal resource availability). In this work, we propose a framework for modelling the directed movement of species based on the heterogeneity of the patches. 

We represent the patched ecosystem as an undirected graph $\mathcal{G} = (\mathcal{V}, \mathcal{E})$ where each vertices $\mathcal{V}=\{1,2,\cdots,N\}$ denotes the habitat patches in the ecosystem; and $\mathcal{E}$ denotes the unweighted edges representing the dispersal pathways between the habitat patches. Mathematically, the patched ecosystem therefore, can be fully described in the form of the adjacency matrix such that 
$\mathrm{A} = (a_{ij}) \in \mathbb{R}^{N \times N}$ defined by
\begin{equation}
        a_{ij} =
\begin{cases}
1, & \text{if distinct patches}\, i \text{ and}\, j\,\text{ are connected,} \\
0, & \text{otherwise},
\end{cases}
\end{equation}
and $a_{ii}=0.$ Clearly, the adjacency matrix $\mathrm{A}$ is symmetric and it encodes the probable dispersal pathways within the ecosystem. On the macroscopic scale, each patch show distinct environmental heterogeneity giving rise to node-feature which is rarely captured in the ecological models. Here, we incorporate local node-specifc ecological processes operating within each patch based on the node feature.

Let $u_i(t)$ denote the population density of the focal species in patch 
$i \in \mathcal{V}$ at time $t$. The intrinsic (node-level) dynamics in 
patch $i$ are described by a nonlinear reaction term 
$F_i(u_i, E_i),$ where $F_i$ represents the net growth rate of the species in patch $i$, which depend on local environmental conditions denoted by $E_i.$ The function $F_i$ captures within-patch biological interactions, 
including intrinsic growth, mortality, competition, predation, etc.

To incorporate spatial dispersal between habitat patches, we couple the 
node-level reaction dynamics through diffusion and advection processes 
defined on the graph $\mathcal{G}$. For clarity and completeness, 
we introduce the necessary terminology used in the formulation below.

\begin{definition}
Let $\mathcal{G} = (\mathcal{V}, \mathcal{E})$ be an undirected graph 
with adjacency matrix $\mathrm{A} = (a_{ij})$. 
For a node $i \in \mathcal{V}$, the set of \emph{neighbors} of $i$ 
is defined as
\[
\mathcal{N}(i) = \{\, j \in \mathcal{V} \; : \; a_{ij} = 1 \,\}.
\]
Therefore, $\mathcal{N}(i)$ consists of all nodes that are directly 
connected to node $i$ by an edge.
\end{definition}

\begin{definition}
Let $\mathcal{G} = (\mathcal{V}, \mathcal{E})$ be an undirected graph. 
The \emph{degree} of a node $i \in \mathcal{V}$, denoted by $d_i$, 
is the number of its neighbors. Mathematically,
\[
d_i = |\mathcal{N}(i)| 
= \sum_{j=1}^{N} a_{ij}.
\]
\end{definition}
For an undirected and unweighted graph, $d_i$ simply counts 
the number of habitat patches directly connected to patch $i.$

\begin{definition}
Let $\mathcal{G} = (\mathcal{V}, \mathcal{E})$ be a directed graph 
with adjacency matrix $\mathrm{A} = (a_{ij})$. 
For a node $i \in \mathcal{V}$, the in-degree and 
out-degree are defined respectively as
$$
d_i^{\mathrm{in}} = \sum_{j=1}^{N} a_{ij}, \qquad \text{and}\qquad
d_i^{\mathrm{out}} = \sum_{j=1}^{N} a_{ji}.
$$
\end{definition}

\begin{definition}
The principal eigenvalue of a matrix $M$, denoted by $\lambda_1$, is defined as the
\[
\lambda_1
=
\max \left\{ \operatorname{Re}(\lambda) : \lambda \in \sigma\big(M\big) \right\},
\]
where $\sigma\big(M\big)$ denotes the spectrum of the matrix $M$.
\end{definition}

\paragraph{Graph Laplacian:} The movement between habitat patches is modelled as unbiased dispersal along the edges. Mathematically, the graph Laplacian \cite{hamilton2020graph} provides a discrete analogue of the 
continuous diffusion operator. In particular, the Laplacian operator quantifies density-driven dispersal. That is, the diffusive flux at node $i$ is given by
\begin{equation}
    d \sum_{j \in \mathcal{N}(i)} (u_j - u_i),
\end{equation}
where $d>0$ denotes the diffusivity of the species. It represent the net movement of population from neighboring patches 
toward patch $i.$ Using the adjacency matrix $\mathrm{A}$ and the degree matrix $ \mathrm{D} = \mathrm{diag}(d_1, d_2, \dots, d_N),$ the graph Laplacian is defined as $-d\mathrm{L}$ where
\[
\mathrm{L} = \mathrm{D} - \mathrm{A}.
\]

\paragraph{Formulation of advection on graph:} To model directional movement driven by environmental gradients, 
we introduce an advection operator on a graph \cite{chapman2015advection,eliasof2024feature}. Though we take a general formulation, the environmental driver may represent temperature, 
resource availability, or habitat suitability associated with a particular node. 

Given a node $i$ in the graph $\mathcal{G},$ we denote the set of neighbors by $\mathcal{N}(i).$ The scalar field $\Theta \in \mathbb{R}^\mathrm{N}$ so that every node $i$ has a scalar value $\Theta_i$ associated with it. We then define the environmental gradient $\nabla \Theta_i$ in the node $i$ to be the directed edge $\nabla \Theta_i = (i,p(i))$ from $i$ to that neighbor $p(i)\in \mathcal{N}(i)$ which have minimum deviation from the optimum value of the species. That gives us
\begin{equation}
    p^u(i) = \mathrm{arg}\min_{j\in \mathcal{N}(i)}|\Theta_j-\Theta_{\mathrm{opt}}^u|
\end{equation}
where $\Theta^u_{\mathrm{opt}}$ denote the optimum environmental condition where the species can thrive. With this formulation, the directed edges were identified to form the directed subgraph $\mathcal{\Tilde{G}}$ of the original graph $\mathcal{G},$ where each node $i$ has at most one out-degree $d_i^\mathrm{out}.$  We give an example of an graph with 8 nodes where $\Theta_i$ is the temperature of the node $i$, such that $\Theta =[18,20,28,35,22,30,22,21]$ and $\Theta^u_{\mathrm{opt}}=20.$ The above discussed formulation gives the optimum directed edges $5\rightarrow2$ and $7\rightarrow 8$ of the original graph.
\begin{figure}[ht!]
    \centering
    \includegraphics[width=0.5\linewidth]{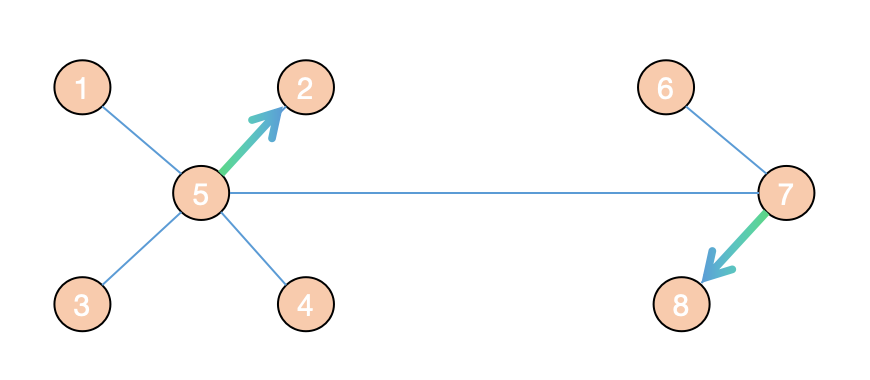}
    \caption{A schematic representation of the directed movement towards the optimum node value $\Theta^u_{\mathrm{opt}}.$}
    \label{fig:placeholder}
\end{figure}

We introduce the notion of discrete advection operator based on the principle of the continuous flux \cite{carmona2025discrete}. In the original continuous formulation the advection is given by
\begin{equation}
    \frac{du}{dt} =-\nabla\cdot(\mathbf{v}u)
\end{equation}
where $u$ is the scalar quantity (can be the population density of a species), $\nabla$ is the divergence operator, and $\textbf{v}$ is the velocity vector. Using the vector calculus identity
\begin{equation}
\mathbf{v} \cdot \nabla u
=
\nabla \cdot ( \mathbf{v} u)
-
u \, (\nabla \cdot \mathbf{v}),
\label{eq:identity}
\end{equation}
and integrating over the control volume we have
\begin{equation}\label{eq:divergence_thm}
\int_{\Omega} \mathbf{v} \cdot \nabla u \, d\Omega
=
\oint_{\partial \Omega} u \, \mathbf{v} \cdot d\mathbf{S}
-
\int_{\Omega} u \, (\nabla \cdot \mathbf{v}) \, d\Omega.
\end{equation}
We construct a discrete analogue of advection on the graph. We assume that each node $i$ has a unit control volume, and the orientation $v_{ij}$ represents the flux from $i \rightarrow j$ or outgoing from the node $i,$ and $v_{ji}$ is the flux incoming to $i.$ We approximate the off-diagonal entries of the advection matrix $\mathrm{A^{adv}=(\Tilde{a}_{ij})}$ by the boundary integral in \eqref{eq:divergence_thm}. This gives us the sum of the fluxes across the adjacent edges such that 
\begin{equation}\label{eq:off_diagonal_entries_adv}
\sum_{i\ne j}\mathrm{\Tilde{a}_{ij}} = \sum_{j\in \mathcal{N}(i),\, i\ne j} (v_{ij} - v_{ji} ).   
\end{equation}
The diagonal entries of the matrix $(\mathrm{\Tilde{a}_{ij}})$ is approximated by the volume integral involving the divergence. On a graph the divergence at node $i$ is approximated by the net inward
flux 
\begin{equation*}\label{eq:diagonal_entries_adv}
(\nabla\cdot\mathbf{v})_i
\approx
\sum_{j \in \mathcal{N}(i)} (v_{ij} - v_{ji}).
\end{equation*}
Combining the boundary and divergence contributions, the discrete advection operator satisfies
\begin{equation}\label{eq:_Disc_Advection_operator}
\sum_j \mathrm{\Tilde{a_{ij}}}u_j
=
\sum_{j \in \mathcal{N}(i)} (v_{ij} - v_{ji}) u_j
-
\left(
\sum_{j \in \mathcal{N}(i)} (v_{ji} - v_{ij})
\right)
u_i.
\end{equation}
\noindent We provide a pseudo-algorithm for the above formulation of the advection matrix. 

\begin{algorithm}
\caption{Construction of the directed flow and advection matrix}
\begin{algorithmic}

\State \textbf{Input:} $N$ (number of patches), 
$A$ (undirected adjacency matrix), 
$\Theta$ (environmental values), 
$\Theta_{\mathrm{opt}}^{u}$ (optimal environmental value)

\State \textbf{Output:} Directed movement matrix $A_u^{\mathrm{dir}}$ and advection matrix $A_u^{\mathrm{adv}}$

\For{$i = 1$ to $N$}
    \State $\mathcal{N}(i) \gets \{ j \mid A(i,j)=1 \}$ \Comment{Neighbors of patch $i$}
    
    \If{$\mathcal{N}(i)$ is empty}
        \State \textbf{continue}
    \EndIf

    \State Compute $d_u(j) = |\Theta_j - \Theta_{\mathrm{opt}}^{u}|$ for all $j \in \mathcal{N}(i)$
    
    \State $p_u(i) \gets \arg\min_{j \in \mathcal{N}(i)} d_u(j)$
    
    \State $A_u^{\mathrm{dir}}(i, p_u(i)) \gets 1$
\EndFor

\State $A_u^{\mathrm{adv}} \gets A_u^{\mathrm{dir}} - (A_u^{\mathrm{dir}})^{\top}$

\For{$i = 1$ to $N$}
    \State $A_u^{\mathrm{adv}}(i,i) \gets -\sum_{k=1}^{N} A_u^{\mathrm{adv}}(k,i)$
\EndFor

\end{algorithmic}
\end{algorithm}

\paragraph{Node-level reaction dynamics:} In a fragmented ecosystem of heterogeneous patch, the node-level reaction dynamics captures the temporal evolution of a particular species at each node of the network. It governs the intrinsic biological processes occurring within each node, such as reproduction, mortality, and intra- or interspecific interactions. The dynamics can be modelled as 
\begin{equation*}
    \dfrac{du_i}{dt} = F(u_i,\Theta_i)
\end{equation*}
where $u_i$ and $\Theta_i$ denote the species density and the environmental condition at node $i.$

\paragraph{Reaction-Diffusion-Advection Model on graph:}
Combining local dynamics, diffusion, and advection, the 
patched ecosystem model can be written in the compact form
\begin{equation}\label{eq:Compact_model}
    \frac{d \mathbf{u}}{dt}
=
\mathbf{F}(\mathbf{u}, \Theta)
- d \mathrm{L}\mathbf{u}
- \alpha \mathrm{A_u^{adv}(\Theta)}\,\mathbf{u}.
\end{equation}
where $\alpha$ and $d$ are the advection and diffusion rates, respectively.

\noindent We now show that the proposed advection operator has the following general properties.  

\begin{lemma}
    The advection operator in \eqref{eq:_Disc_Advection_operator} is mass conserving. 
\end{lemma}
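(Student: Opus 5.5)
The plan is to read ``mass conserving'' as the statement that the advection term alone leaves the total population $M(t)=\sum_{i=1}^N u_i(t)$ unchanged. That is, under $\frac{d\mathbf{u}}{dt}=-\alpha\mathrm{A_u^{adv}}\mathbf{u}$ we want $\frac{dM}{dt}=-\alpha\,\mathbf{1}^\top \mathrm{A_u^{adv}}\mathbf{u}=0$ for every $\mathbf{u}$. This is equivalent to the single algebraic identity $\mathbf{1}^\top \mathrm{A_u^{adv}}=\mathbf{0}^\top$, i.e.\ every column of the advection matrix sums to zero. So the whole proof reduces to checking zero column sums of the matrix produced by \eqref{eq:_Disc_Advection_operator}, equivalently by the construction in the algorithm.

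First I would fix the matrix explicitly from the algorithm. Let $B=\mathrm{A_u^{dir}}$ be the $0/1$ matrix with $b_{ij}=1$ iff $j=p_u(i)$. Since $p_u(i)\in\mathcal{N}(i)$ and $a_{ii}=0$, we have $b_{ii}=0$. The off-diagonal entries are $\tilde a_{ij}=b_{ij}-b_{ji}$ for $i\neq j$, which is the discrete flux $v_{ij}-v_{ji}$ with $v_{ij}=b_{ij}$. Because $B-B^\top$ has zero diagonal, the diagonal is then overwritten by $\tilde a_{ii}=-\sum_{k\neq i}\tilde a_{ki}$.

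Next I would compute the $i$-th column sum directly: $\sum_k \tilde a_{ki}=\tilde a_{ii}+\sum_{k\neq i}\tilde a_{ki}=0$ by definition of the diagonal. It follows that
\[
\frac{dM}{dt}=-\alpha\sum_{i}\sum_{k}\tilde a_{ki}u_i=-\alpha\sum_i u_i\Big(\sum_k \tilde a_{ki}\Big)=0,
\]
after exchanging the finite sums. To connect with \eqref{eq:_Disc_Advection_operator}, I would rewrite the diagonal as $\tilde a_{ii}=\sum_{k\in\mathcal{N}(i)}(b_{ik}-b_{ki})$, which is minus the net inward flux $\sum_{k}(v_{ki}-v_{ik})$. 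This is exactly the divergence term multiplying $u_i$ there, so the two descriptions agree. As a remark, I would note that the Laplacian $\mathrm{L}$ is symmetric with zero row sums, so $\mathbf{1}^\top\mathrm{L}=0$ as well. Hence the full transport part of \eqref{eq:Compact_model} conserves mass, and only $\mathbf{F}$ changes $M$.

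The computation itself is routine. The main obstacle is reconciling indices and signs: the operator \eqref{eq:_Disc_Advection_operator} is written row-wise, while conservation is a column property. I expect the printed signs of the boundary and divergence terms to need care. My approach is to take the algorithm as the definitive definition of $\mathrm{A_u^{adv}}$ and verify \eqref{eq:_Disc_Advection_operator} against it entrywise, pairing each edge flux $v_{ij}$ with its opposite $v_{ji}$ so that the contributions telescope in the double sum.
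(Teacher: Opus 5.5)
Your proposal is correct and is essentially the paper's proof: exchange the finite sums to get $\frac{d}{dt}\sum_i u_i=\sum_j u_j\sum_i \tilde a_{ij}$, then use the zero column sums of $\mathrm{A_u^{adv}}$. The only difference is that you verify those column sums from the algorithm (zero diagonal of $B-B^\top$, then the diagonal overwrite), which the paper just asserts ``by construction''. Your worry about signs is unnecessary: the diagonal $\sum_{j}(v_{ij}-v_{ji})$ in \eqref{eq:_Disc_Advection_operator} exactly cancels the column's off-diagonal entries $\sum_{k}(v_{ki}-v_{ik})$.
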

\begin{proof}
    To show that our advection operator is mass-conserving we need to show that the total population does not change over time due to advection, that is $\dfrac{d}{dt}\sum_{i=1}^N u_i = 0.$ Let $u_i(t)$ represent the population density at the node $i$.\\
    Now \begin{equation}
        \begin{aligned}
            \dfrac{d}{dt}\sum_{i=1}^N u_i = \sum_{i=1}^N\dfrac{du_i}{dt} = \sum_{i=1}^N\left(\sum_{j=1}^N \mathrm{\Tilde{a_{ij}}}u_j\right)= \sum_{j=1}^N u_j \left( \sum_{i=1}^N \mathrm{\Tilde{a_{ij}}} \right)
        \end{aligned}
    \end{equation}
    By our construction of the matrix $\mathrm{A^{adv}}$ we have $\sum_{i=1}^N \mathrm{\Tilde{a_{ij}}}=0.$ Therefore, $\dfrac{d}{dt}\sum_{i=1}^N u_i = 0.$
\end{proof}
\begin{lemma}
The advection operator in \eqref{eq:_Disc_Advection_operator} is
generally asymmetric and non-normal.
\end{lemma}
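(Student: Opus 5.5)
The plan is to work directly with the matrix form produced by the algorithm, $\mathrm{A_u^{adv}} = \mathrm{A_u^{dir}} - (\mathrm{A_u^{dir}})^{\top} + \mathrm{\Lambda}$. Here $\mathrm{\Lambda}$ is the diagonal matrix with entries $\Lambda_{ii} = -\sum_k \mathrm{A_u^{adv}}(k,i)$, computed from the off-diagonal part. Write $B = \mathrm{A_u^{dir}} - (\mathrm{A_u^{dir}})^{\top}$, which is skew-symmetric. Since $\mathrm{\Lambda}$ is symmetric, $\mathrm{A_u^{adv}} - (\mathrm{A_u^{adv}})^{\top} = 2B$. So asymmetry holds exactly when $B \neq 0$, that is, when $\mathrm{A_u^{dir}}$ is not symmetric. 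This happens as soon as some directed edge $i \to p(i)$ is not reciprocated by $p(p(i)) = i$. Since the claim is ``generally'' asymmetric, I will show that $B \ne 0$ in every case except the degenerate one where all selected edges come in mutual pairs. I will also exhibit a concrete instance; the 8-node example in the text, with the edges $5\to2$ and $7\to8$, should serve.

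For non-normality, I will compute the commutator $M M^{\top} - M^{\top} M$ for $M = \Lambda + B$, using $B^{\top} = -B$:
\[
MM^{\top} - M^{\top}M = (\Lambda+B)(\Lambda-B) - (\Lambda-B)(\Lambda+B) = 2(B\Lambda - \Lambda B).
\]
Non-normality is therefore equivalent to $[B,\Lambda] \neq 0$. Entrywise, $(B\Lambda - \Lambda B)_{ij} = b_{ij}(\Lambda_{jj} - \Lambda_{ii})$. So $M$ fails to be normal whenever some edge with $b_{ij} \neq 0$ joins two nodes with different diagonal entries.

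The remaining step is to show this typically occurs. Column $i$ of $B$ has entries $b_{ki} = a^{\mathrm{dir}}_{ki} - a^{\mathrm{dir}}_{ik}$, so its sum is $d_i^{\mathrm{in}} - d_i^{\mathrm{out}}$, with the paper's degree conventions adapted to the directed graph. Hence $\Lambda_{ii} = -(d_i^{\mathrm{in}} - d_i^{\mathrm{out}})$, which is up to sign the net degree imbalance. Now take an unreciprocated edge $i \to j$ with $j = p(i)$. Then $b_{ij} \ne 0$, and non-normality follows if $i$ and $j$ have different net imbalances. A minimal example is the path $1-2-3$ with $\Theta_2$ optimal. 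Both endpoints point to node 2 and node 2 points to one of them, say 1. The imbalances are then $0$, $1$ and $-1$ for nodes 1, 2 and 3, so $b_{32}(\Lambda_{22}-\Lambda_{33}) \neq 0$.

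I would close with this explicit $3\times 3$ matrix, or the paper's 8-node example, as a witness. The main obstacle is phrasing ``generally'' precisely. I will state that the exceptions form a degenerate set: either $B=0$, or $\Lambda$ is constant on every pair of nodes joined by a nonzero entry of $B$. Because the $\arg\min$ structure funnels edges toward well-suited nodes, these conditions fail generically, since such nodes accumulate in-degree while their out-degree is at most one.
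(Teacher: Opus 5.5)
Your proposal is correct, and it takes a different route from the paper that is more rigorous. The paper's argument is qualitative. It notes that the off-diagonal part is skew-symmetric, and hence normal on its own, and that the diagonal $\widetilde a_{ii}=\sum_k(v_{ik}-v_{ki})$ is generically nonzero. It then asserts that ``the addition of a nonzero diagonal component destroys normality unless the discrete divergence vanishes identically.'' That implication is never proved, and it is false as a general principle: $B+cI$ with $B$ skew-symmetric is normal.

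Your identity $MM^{\top}-M^{\top}M=2(B\Lambda-\Lambda B)$, with entries $2b_{ij}(\Lambda_{jj}-\Lambda_{ii})$, supplies exactly the missing step. The path $1$--$2$--$3$ then gives a witness the reader can check: the lower $2\times 2$ block $\begin{pmatrix}-1&-1\\1&1\end{pmatrix}$ is a nonzero nilpotent. The paper's 8-node example, by contrast, depends on a graph shown only in a figure.

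You can also connect your characterization to the paper's condition:
\begin{itemize}
\item A nonzero entry $b_{ki}$ occurs only for $k$ in the same connected component $C$ of the support graph of $B$. Since $\Lambda_{ii}=-\sum_k b_{ki}$, skew-symmetry gives $\sum_{i\in C}\Lambda_{ii}=0$.
\item So ``$\Lambda$ constant on components'' forces $\Lambda=0$. Hence the operator is normal if and only if the discrete divergence vanishes at every node, which is the paper's exceptional condition, now actually proved.
\end{itemize}

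Your heuristic ``funneling'' remark can likewise be made rigorous. Suppose the node $j^*$ with the unique smallest $|\Theta_j-\Theta^u_{\mathrm{opt}}|$ has degree $d_{j^*}\ge 2$. Then every neighbour selects it, so $\Lambda_{j^*j^*}=1-d_{j^*}\neq 0$. This gives both asymmetry (since $B\neq 0$) and non-normality.
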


\begin{proof}
Clearly, the matrix $\mathrm{A^{adv}}=(\mathrm{\Tilde{a}}_{ij})$ is asymmetric. A matrix $\mathrm{B}$ is normal if
\[
B B^{\top}
=
B^{\top}B.
\]
In our formulation the off-diagonal part of the advection matrix $\mathrm{\Tilde{a_{ij}}}$ is skew-symmetric,
but the diagonal entries are
\[
\widetilde a_{ii}
=
\sum_{k}(v_{ik}-v_{ki}),
\]
which are generally nonzero. Therefore, $\mathrm{A^{adv}} $ is not skew-symmetric. Since a real skew-symmetric matrix is normal,
the addition of a nonzero diagonal component destroys normality
unless the discrete divergence vanishes identically.
In the generic case 
\[
\sum_{k}(v_{ik}-v_{ki}) \neq 0.
\]
Thus, 
\[
\mathrm{A^{adv}}\, \mathrm{A^{adv}}^{\top}
\neq
\mathrm{A^{adv}}^{\top}\mathrm{A^{adv}},
\]
and hence the matrix $\mathrm{A^{adv}}$ is non-normal.
\end{proof}

%%%%%%%%%%%%%

%%%%%%%%%

\section{Results and ecological relevance}

Predicting how distribution of species is affected by the ongoing climate change, we first need to understand the impact of temperature on the growth and well-being of the species which translate into population dynamics. Temperature is one the most important factor which influences the physiological performance of species, especially ectotherms (organisms whose body temperature largely depends on the surrounding environment). This strong dependence makes them highly vulnerable to climate warming. Here, we model the dominant dependence of temperature on the intrinsic growth rate of the species. The per capita birth rate exhibits a unimodal response
to temperature given by the gaussian function \cite{scranton2017predicting,amarasekare2015effects}

\begin{equation}\label{eq:temperature_dependency}
    \gamma^u(\Theta) = 
\gamma^u_{\mathrm{opt}} e^{
-\frac{(\Theta_i^u - \Theta_{\mathrm{opt}}^u)^2}{2 s_u^2}},
\end{equation}
where $\gamma^u_{\mathrm{opt}}$ is the optimal temperature at which the growth rate is maximum and $s_u$ gives the temperature range over which the species show positive growth. Considering a temperature range with $\gamma^u_{\mathrm{opt}}=3,\,s_u=2$ and $ \Theta_{\mathrm{opt}}^u=25^\circ$ C, the the temperature response of per capita birth $\gamma^u(\Theta)$ is shown in the Fig.~\ref{fig:temperature_profile}. We will follow this temperature profile throughout the manuscript for our model \eqref{eq:Compact_model}. 

\begin{figure}[ht!]
    \centering
    \includegraphics[width=0.5\linewidth]{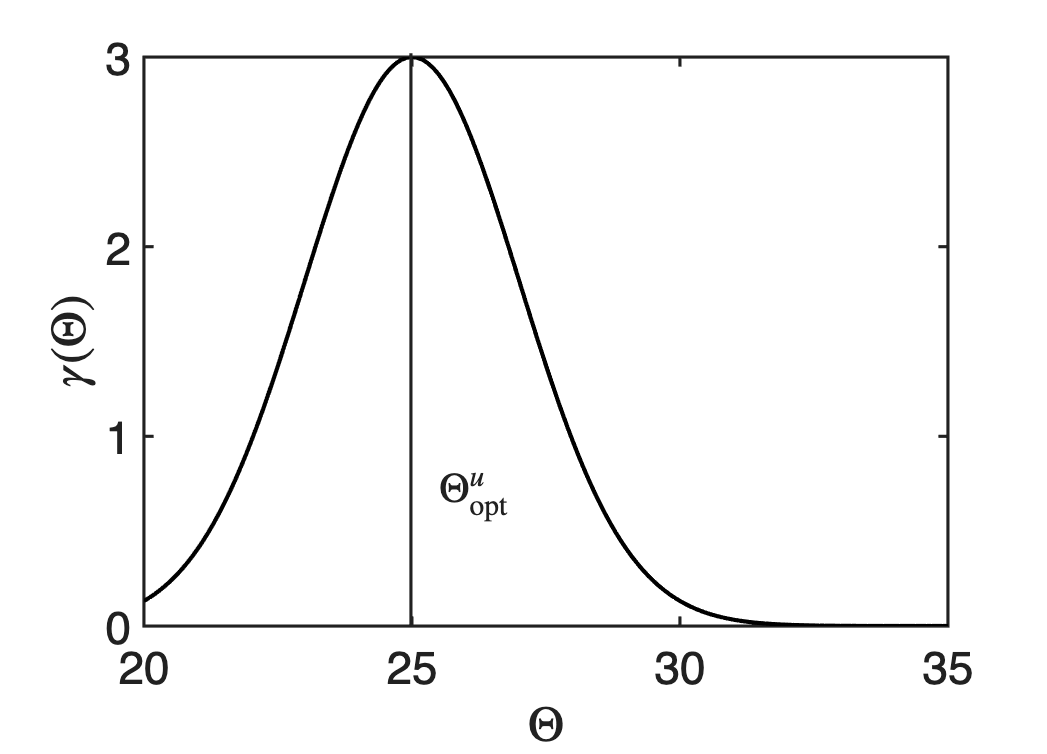}
    \caption{Temperature response of per capita growth rate.}
    \label{fig:temperature_profile}
\end{figure}

\subsection{Advection driven persistence criteria }
We consider the general ecological network system \eqref{eq:Compact_model}, where the local dynamics at node $i$ is given by $F(u_i,\Theta_i) = \gamma(\Theta_i)u(1-u)-\delta u.$ Here we consider the growth rate to be dependent on the node feature, in particular, temperature of the particular patch. The equilibrium densities of the network is the complete extinction state where $\textbf{u}_*=0$ and the existence state $\textbf{u}_* = 1-\dfrac{\delta}{\gamma(\Theta).}$ Note that the extinction equilibria always exists. We linearize the system around the equilibria $\textbf{u}_*=0$ and obtain the linearized system 
\begin{equation}\label{eq:Linearized_model}
    \frac{d \mathbf{u}}{dt}
=
\mathbf{M}(\Theta,d,\alpha) \mathbf{u},
\end{equation}
where 
\begin{equation}
    \mathbf{M}(\Theta,d,\alpha)  = \mathrm{R}-d \mathrm{L}
- \alpha \mathrm{A_u^{adv}(\Theta)}\,
\end{equation}
and the matrix $\mathrm{R} = \mathrm{diag}(\gamma(\Theta_i)-\delta),$ and $\mathrm{L},\, \mathrm{A_u^{adv}(\Theta)}$ are as constructed in previous section. 
Let
\[
M(\Theta, d, \alpha) = R - dL - \alpha A^{\mathrm{adv}},
\]
where $R = \mathrm{diag}(r_i - \delta)$, $L$ is the graph Laplacian, and $A^{\mathrm{adv}}$ is the advection matrix.

\begin{theorem}[Global Persistence]
     Let $\lambda_1(\Theta,\,d,\,\alpha)$ be the principal eigenvalue of the matrix $M(\Theta, d, \alpha),$ therefore, $\lambda_1(\Theta,\,d,\,\alpha)$ satisfies the following eigenvalue problem \begin{equation}
    M(\Theta, d, \alpha)\phi_i = \lambda_1(\Theta,\,d,\,\alpha)\phi_i.
\end{equation}
If $\lambda_1(\Theta,\,d,\,\alpha)< 0,$ then the extinction state $\mathbf{u}=0$ of the system \eqref{eq:Compact_model} is globally asymptotically stable, and if $\lambda_1(\Theta,\,d,\,\alpha)> 0$ extinction state is unstable and the population persists in all nodes at $\mathbf{u}_*.$ The critical persistence threshold is $\lambda_1(\Theta,\,d,\,\alpha)=0.$
\end{theorem}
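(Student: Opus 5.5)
The plan is to treat \eqref{eq:Compact_model} with $F(u_i,\Theta_i)=\gamma(\Theta_i)u_i(1-u_i)-\delta u_i$ as a \emph{cooperative, strictly sublinear} system on the nonnegative cone $\mathbb{R}^N_{+}$. The argument rests on three pieces: the Perron--Frobenius theorem for the Jacobian $M=M(\Theta,d,\alpha)$ at the origin, comparison principles, and the theory of monotone sublinear systems (Hirsch, Smith, Zhao).

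\textbf{Step 1: $M$ is Metzler and irreducible.} The off-diagonal entries of $-d\mathrm{L}$ are $d\,a_{ij}\ge 0$. By the construction in the algorithm, the off-diagonal entries of $-\alpha \mathrm{A_u^{adv}}$ are $\alpha(A^{\mathrm{dir}}_{ji}-A^{\mathrm{dir}}_{ij})\in\{-\alpha,0,\alpha\}$, and they are supported on edges of $\mathcal{G}$. Hence $M_{ij}\ge d-\alpha$ whenever $a_{ij}=1$, and $M_{ij}=0$ otherwise.

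I expect this to be the main obstacle. The statement does not come with a sign condition, and for $\alpha>d$ the matrix $M$ can have negative off-diagonal entries. Monotonicity of the flow, and with it the whole argument, then fails. I would therefore make explicit the standing assumption $0\le\alpha<d$ and $\mathcal{G}$ connected. Under it, $M$ is Metzler, and its sign pattern coincides with that of $\mathrm{A}$, so $M$ is irreducible. Perron--Frobenius then gives a real, algebraically simple principal eigenvalue $\lambda_1$ with an eigenvector $\phi\gg 0$. This justifies the eigenvalue problem written in the theorem. For $\alpha\ge d$ I would either restrict to this case or remark that the claim is only local there.

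\textbf{Step 2: write the system in a convenient form.} Writing $\Gamma=\mathrm{diag}(\gamma(\Theta_i))$, the system becomes
\[
\dot{\mathbf u}=M\mathbf u-\Gamma(\mathbf u\circ\mathbf u)=:f(\mathbf u).
\]
Since $M$ is Metzler, $f$ is cooperative, and $\mathbb{R}^N_{+}$ is forward invariant because $f_i(\mathbf u)\ge 0$ when $u_i=0$ and $\mathbf u\ge0$. Choosing $K$ large, the constant vector $K\mathbf 1$ is a strict supersolution, since
\[
f_i(K\mathbf 1)=K\textstyle\sum_j M_{ij}-K^2\gamma_i<0 .
\]
Together with $0$ this gives an invariant order interval $[0,K\mathbf 1]$ that absorbs every trajectory, so all solutions are bounded.

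\textbf{Step 3: the case $\lambda_1<0$.} For $\mathbf u\ge0$ we have $f(\mathbf u)\le M\mathbf u$. The Kamke comparison theorem for cooperative systems then gives $0\le \mathbf u(t)\le e^{Mt}\mathbf u(0)$, and $e^{Mt}\mathbf u(0)\to0$ exponentially because $\lambda_1<0$. Combined with local stability from the linearization \eqref{eq:Linearized_model}, this gives global asymptotic stability of $\mathbf u=0$.

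\textbf{Step 4: the case $\lambda_1>0$.} Instability of $0$ follows from linearization. For persistence, take $\varepsilon\phi$ with $\varepsilon$ small. Then
\[
f_i(\varepsilon\phi)=\varepsilon\phi_i(\lambda_1-\varepsilon\gamma_i\phi_i)>0,
\]
so $\varepsilon\phi$ is a strict subsolution. The solution starting at $\varepsilon\phi$ therefore increases monotonically, stays below $K\mathbf 1$, and converges to a positive equilibrium $\mathbf u_*\gg0$.

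For uniqueness and global attraction I would use strict sublinearity. For $s\in(0,1)$ and $\mathbf u\gg0$,
\[
f(s\mathbf u)-sf(\mathbf u)=s(1-s)\Gamma(\mathbf u\circ\mathbf u)\gg0 .
\]
By Zhao's / Hirsch's theorem for cooperative, irreducible, strictly sublinear systems, $\mathbf u_*$ is then the unique positive equilibrium and attracts every solution with $\mathbf u(0)\ge0$, $\mathbf u(0)\ne0$. Irreducibility makes such solutions strictly positive for $t>0$, so we can squeeze them between $\varepsilon\phi$ and $K\mathbf 1$.

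I would note that $\mathbf u_*$ is this positive equilibrium, which differs in general from the patchwise value $1-\delta/\gamma(\Theta_i)$ because of the coupling. The threshold statement at $\lambda_1=0$ then follows by continuity of the simple eigenvalue in $(\Theta,d,\alpha)$.
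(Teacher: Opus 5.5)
The paper states this theorem without proof; the text moves straight on to the node-specific result. So there is no argument of the authors' to compare yours with. Your proof is the standard one for cooperative, strictly sublinear systems, and it is sound. Its ingredients are:
\begin{itemize}
\item Perron--Frobenius for the irreducible Metzler matrix $M$;
\item the Kamke bound $0\le\mathbf u(t)\le e^{Mt}\mathbf u(0)$ when $\lambda_1<0$;
\item the ordered sub/supersolution pair $\varepsilon\phi\le K\mathbf 1$;
\item the identity $f(s\mathbf u)-sf(\mathbf u)=s(1-s)\Gamma(\mathbf u\circ\mathbf u)$, which gives uniqueness and global attraction of the positive equilibrium when $\lambda_1>0$.
\end{itemize}
You can replace the somewhat vague continuity remark about the threshold with a one-line argument. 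If $\mathbf e\gg0$ is an equilibrium and $\psi\gg0$ is the left Perron vector of $M$, then $\lambda_1\psi^{\top}\mathbf e=\psi^{\top}M\mathbf e=\psi^{\top}\Gamma(\mathbf e\circ\mathbf e)>0$. Hence a positive equilibrium exists if and only if $\lambda_1>0$, and at $\lambda_1=0$ the sublinear dichotomy forces extinction.

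The two changes you make to the statement are forced, not cosmetic.
\begin{itemize}
\item \textbf{Meaning of $\mathbf u_*$.} The vector with entries $1-\delta/\gamma(\Theta_i)$ is not an equilibrium of the coupled system. It is also negative at every node with $\gamma_i<\delta$. So $\mathbf u_*$ has to mean the positive coupled equilibrium, as you take it.
\item \textbf{Restriction $\alpha<d$.} With $\mathrm{A_u^{adv}}$ taken literally from the algorithm, this restriction is not an artefact of your method. Take the path graph with edges $\{1,2\}$ and $\{2,3\}$, with node $3$ closer to $\Theta^u_{\mathrm{opt}}$ than node $1$. Then $p^u(1)=2$, $p^u(2)=3$, $p^u(3)=2$, so $M_{12}=d-\alpha$. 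From $\mathbf u(0)=(0,1,0)$ one gets $\dot u_1(0)=d-\alpha<0$ whenever $\alpha>d$. The nonnegative cone is therefore not even invariant, and the theorem's conclusions about populations cannot hold there.
\end{itemize}

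However, in its proof of the node-specific theorem the paper actually uses the upwind operator: inflow $\alpha\sum_{j\to i}u_j$ and outflow $\alpha u_i$. Its off-diagonal contribution to $M$ is $\alpha A^{\mathrm{dir}}_{ji}\ge0$. Under that reading, $M$ is Metzler with the sign pattern of $\mathrm{A}$ for every $\alpha\ge0$ and $d>0$ on a connected graph. Your Steps~2--4 then go through unchanged, and they also cover the regimes $\alpha\ge d$ used in the paper's simulations. You should state explicitly which advection operator you adopt, since whether the theorem holds depends on that choice.
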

In order to understand the node-specific persistence, we give the following theorem. 
\begin{theorem}[Node-specific Persistence]
Consider the advection only system
\[
\frac{d \mathbf{u}}{dt}
=
\mathbf{F}(\mathbf{u}, \Theta)
-
\alpha \mathrm{A_u^{adv}}(\Theta)\mathbf{u},
\qquad \alpha>0,
\]
where 
\[
F_i(u_i,\Theta_i)
=
\gamma(\Theta_i)u_i(1-u_i)
-
\delta u_i.
\]

Let us define $\gamma_i := \gamma(\Theta_i)$ and 
$r_i := \gamma_i - \delta.$ Then the $i^{th}$ node satisfies
\begin{equation*}
    \dot u_i
=
u_i(r_i - \alpha)
-
\gamma_i u_i^2
+
\alpha \sum_{j\to i} u_j.
\end{equation*}

If $i^{th}$ node receives positive equilibrium inflow such that 
\begin{equation}
    \alpha \sum_{j\to i} u_j^* > 0,
\end{equation}
then there exists a strictly positive equilibrium $u_i^*>0$
even if $r_i < 0.$ 
\end{theorem}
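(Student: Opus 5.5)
The plan is to treat the statement in two stages. First derive the scalar node equation from the construction of $\mathrm{A_u^{adv}}$ in the algorithm. Then, treating the equilibrium inflow into node $i$ as a fixed positive constant, solve the resulting scalar quadratic and show it has a strictly positive root regardless of the sign of $r_i$.

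\textbf{Step 1 (node equation).} I would use the flux reading of the operator: each node has out-degree at most one in $\tilde{\mathcal G}$, along the edge $i\to p^u(i)$. The term $-\alpha\mathrm{A_u^{adv}}\mathbf u$ is read so that node $i$ loses $\alpha u_i$ through this outgoing edge and gains $\alpha u_j$ from every $j$ with $p^u(j)=i$. This is consistent with the mass conservation established in the first lemma, since every unit leaving one node enters exactly one other. Writing $\sum_{j\to i}$ for the sum over such $j$ and adding $F_i=\gamma_iu_i(1-u_i)-\delta u_i$ gives
\[
\dot u_i=u_i(r_i-\alpha)-\gamma_iu_i^2+\alpha\sum_{j\to i}u_j .
\]
A node with no neighbours simply has no outflow, and this case can be set aside. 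I expect this step to be the main obstacle. The signs of the diagonal and transpose entries produced by the algorithm must be reconciled with this flux interpretation. If the literal matrix gives a different diagonal, I would instead take the flux form as the defining convention of \eqref{eq:_Disc_Advection_operator} for the node-level statement.

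\textbf{Step 2 (scalar equilibrium).} Fix the inflow $I_i:=\alpha\sum_{j\to i}u_j^*>0$. An equilibrium $u_i^*$ of node $i$ solves
\[
\gamma_iu^2-(r_i-\alpha)u-I_i=0 .
\]
Since $\gamma_i=\gamma(\Theta_i)>0$ by the Gaussian form \eqref{eq:temperature_dependency}, the discriminant $(r_i-\alpha)^2+4\gamma_iI_i$ is strictly positive. The product of the roots is $-I_i/\gamma_i<0$, so there is exactly one positive root:
\[
u_i^*=\frac{(r_i-\alpha)+\sqrt{(r_i-\alpha)^2+4\gamma_iI_i}}{2\gamma_i}>0 .
\]
This argument never uses the sign of $r_i$, so it covers the case $r_i<0$ claimed in the statement.

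\textbf{Step 3 (stability and consistency).} I would add that the right-hand side $g(u)$ of the node equation satisfies $g'(u_i^*)=(r_i-\alpha)-2\gamma_iu_i^*=-\sqrt{(r_i-\alpha)^2+4\gamma_iI_i}<0$. Hence this equilibrium is locally attracting for the node dynamics with the inflow frozen.

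For consistency of the hypothesis at the network level, I would note that the off-diagonal entries of the linear part are nonnegative (they are $\alpha$ on the inflow edges and $0$ otherwise). The full system is therefore cooperative. Positive equilibria at upstream nodes, for instance those with $r_j>\alpha$, then propagate positive inflow downstream along $\tilde{\mathcal G}$. I would only remark on this, since the theorem assumes the inflow is positive rather than asking us to prove it.
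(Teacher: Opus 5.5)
Your proposal is correct and follows the paper's proof. The paper also reads the node equation off the one-outgoing-edge structure of $\tilde{\mathcal G}$, then shows that $\gamma_i u^2-(r_i-\alpha)u-\alpha\sum_{j\to i}u_j^*=0$ has a positive root via the discriminant and $\sqrt{\Delta}>|r_i-\alpha|$, which is equivalent to your observation that the product of the roots is negative. Your caution in Step 1 is warranted: the paper only asserts $(\mathrm{A_u^{adv}}\mathbf u)_i=\sum_{j\to i}u_j-u_i$, which with the $-\alpha$ in the model gives the opposite sign to the stated node equation and is not what $A_u^{\mathrm{dir}}-(A_u^{\mathrm{dir}})^{\top}$ from the algorithm produces, so taking the flux form as the defining convention is the right way to make that step rigorous.
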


\begin{proof}
By construction for each node $i$,
\begin{equation}
    (\mathrm{A_u^{\mathrm{adv}}} \mathbf{u})_i
=
\sum_{j\to i} u_j
-
\sum_{i\to k} u_i.
\end{equation}
The directed matrix is constructed so that
each node has exactly one outgoing edge (whenever it has neighbors),
the outflow term satisfies
\[
\sum_{i\to k} u_i = u_i.
\]

At a positive equilibrium,
\[
0
=
u_i^*(r_i - \alpha)
-
\gamma_i (u_i^*)^2
+
\alpha \sum_{j\to i} u_j^*.
\]

We define the discriminant as 
$\Delta
=
(r_i - \alpha)^2
+
4\gamma_i \alpha \sum_{j\to i} u_j^*.$
Since $\gamma_i>0$ and $\alpha>0$, we have $\Delta>0$
whenever $\sum_{j\to i} u_j^*>0$. The positive root is given by
$u_i^*
=
\frac{
(r_i - \alpha)
+
\sqrt{(r_i - \alpha)^2
+
4\gamma_i \alpha \sum_{j\to i} u_j^*}
}
{2\gamma_i}.$
Since the term inside square root strictly exceeds $|r_i-\alpha|$
whenever $\sum_{j\to i} u_j^*>0$, the numerator is positive.
Hence $u_i^*>0$. Therefore, even when $r_i<0$, a positive equilibrium
exists for $\alpha>0$ provided inflow of population in node $i$ is nonzero.
\end{proof}

The Fig.~\ref{fig:network_random_distribution} depicts the numerical justification of the above theorem with diffusivity $d=1$ and advection strength $\alpha=1$. We present a random Watts-Strogatz network \cite{watts1998collective} network with N=100 nodes and 700 edges. The environmental variable (node feature) $\Theta$ is taken to be randomly distributed over the entire network. Our result shows that the distribution of the species is more towards the nodes having highest in-degrees. The population does not show extinction at any nodes because of the high diffusivity and strong advection considered in the model. Ecologically, the above theorem proves that even if the patch is locally unsuitable for species growth and reproduction, the
positive advective inflow from the neighboring nodes might generate a strictly positive equilibrium density. The high Pearson correlation coefficient ($r=0.969$) indicates that population accumulation predominantly occurs in highly connected nodes, suggesting that network topology strongly governs the spatial destribution of the species under advection. It follows directly from the inflow–outflow structure imposed by the construction of the advection matrix.

% \begin{figure}[ht!]
%     \centering
%     \subfigure[]{\includegraphics[width=0.3\linewidth]{Figures/one_species_random_E_distribution.png}}
%     \subfigure[]{\includegraphics[width=0.3\linewidth]{Figures/One_species_random_E_u_distrubution.png}}
%     \subfigure[]{\includegraphics[width=0.3\linewidth]{Figures/One_species_random_E_degree_u.png}}
%     \caption{(a) Random distribution of $\Theta$ across the network; (b) The accumulation of the density at the optimal nodes forming hotspots of population; (c) The dependence of the formation of hotspots based on the optimal thermal regime and ecological corridors. }
%     \label{fig:network_random_distribution}
% \end{figure}

\begin{figure}[ht!]
    \centering
    \subfigure[]{\includegraphics[width=0.35\linewidth]{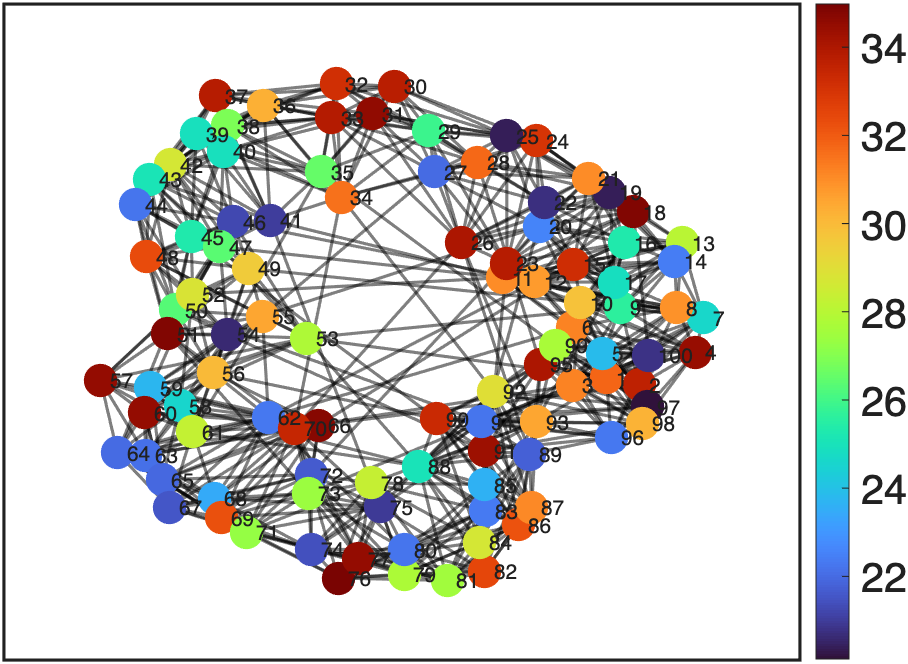}}
    \subfigure[]{\includegraphics[width=0.35\linewidth]{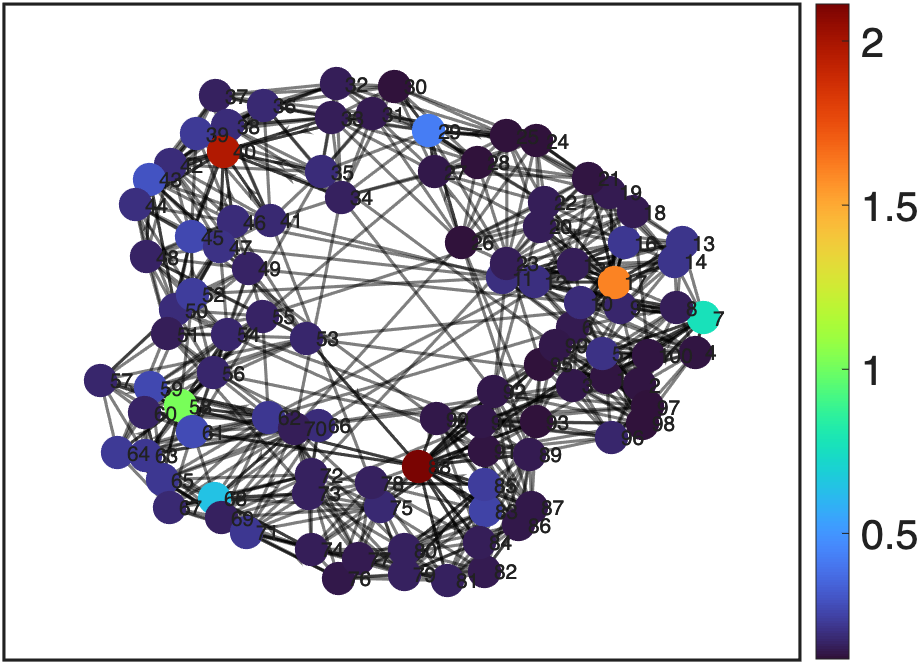}}
    
    \subfigure[]{\includegraphics[width=0.33\linewidth]{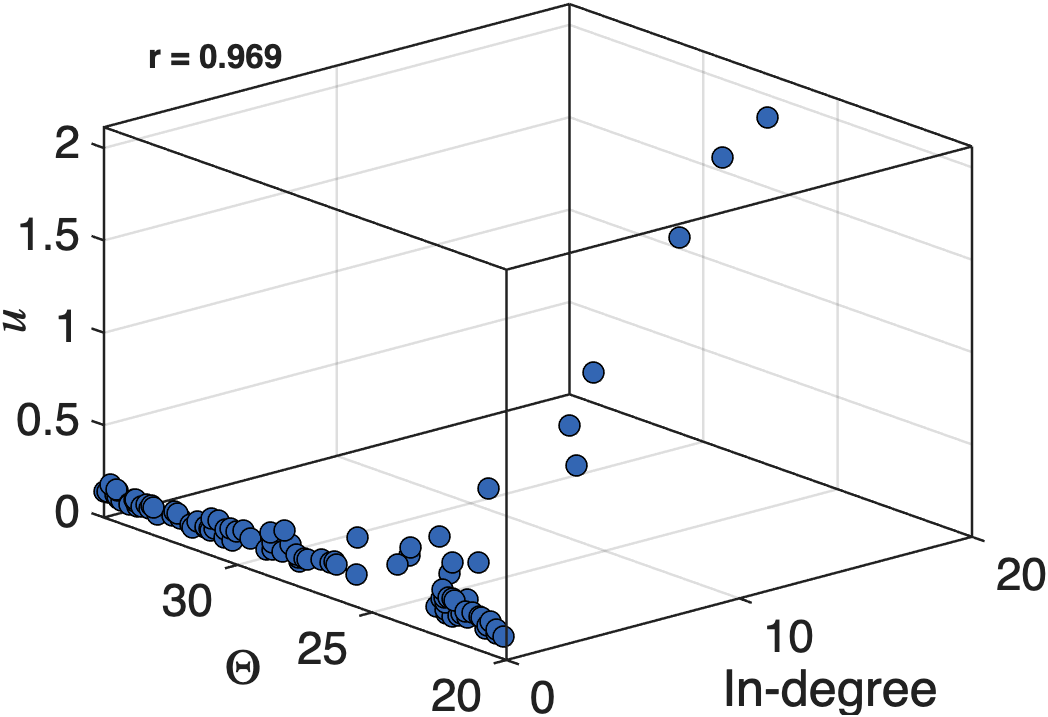}}
    \caption{(a) Random distribution of $\Theta$ across the network; (b) The accumulation of the density at the optimal nodes forming hotspots of population; (c) The dependence of the formation of hotspots based on the optimal thermal regime and ecological corridors. }
    \label{fig:network_random_distribution}
\end{figure}

\subsection{Directed movement of species towards optimal niche} \label{Sec:optimal_niche_prediction}
While modelling the distribution of environmental conditions over the graph, a reasonable assumption to consider is that a particular node feature is co-related to the feature of its neighboring nodes. Depending on the empirical construction of the network, this may not be the case also. Therefore, we study two cases in our model setup: First, where temperature is distributed randomly across the network, and second, where $T_i$ and $T_j$ are correlated if there is a link between these nodes, i.e, $\mathrm{A_{ij}}=1.$ For the later, it would mean that the temperature of a patch depends directly on the temperatures of neigbouring patches. Therefore, the habitat suitability of a species depends on its nearby nodes. With this assumption, we choose temperature to be the Gaussian Random field over the network \cite{rue2005gaussian}. A Gaussian random field on the network is defined as a multivariate Gaussian random vector
\[
\Theta \sim \mathcal{N}(\mu,\Sigma),
\]
where $\mu \in \mathbb{R}^N$ is the mean vector and $\Sigma \in \mathbb{R}^{N \times N}$ is a covariance matrix describing the spatial correlations between nodes with $\Sigma_{ij}$ measuring how strongly the environmental conditions at node $i$ are correlated with node $j.$ We assume a constant mean $\mu=\mu_0 \mathbf{1}$ and choose a covariance structure induced by the graph Laplacian $\mathrm{L}$ such that
\[
\Sigma = e^{-\tfrac{\sigma^2L}{2} },
\]
where $\sigma^2>0$ is a smoothing parameter controlling the spatial correlation length. Equivalently, the field $\Theta$ can be constructed by diffusing spatial white noise over the network,
\[
\Theta =e^{-\tfrac{\sigma^2L}{2} }\xi,
\qquad
\xi \sim \mathcal{N}(0,I),
\]
which represents the diffusion of $\Theta$ on the graph. The parameter $\sigma$ controls the degree of smoothing to the neighboring nodes where small values of $\sigma$ yield weakly correlated fields and larger values produce strong correlation among neighboring nodes. This construction ensures that nearby nodes in the network exhibit similar environmental conditions, while allowing heterogeneity at larger network distances. We then rescale the field to our interval interest $[a,b]$ of $\Theta$,
\[
\Theta_i \;\mapsto\; a + (b-a)\frac{\Theta_i-\min(\Theta)}{\max(\Theta)-\min(\Theta)}, \qquad i=1,\dots,N.
\]
We simulate our one-species model \eqref{eq:Compact_model} with temperature as the node feature and the per capita growth rate of the  depending on the ambient temperature as described in \eqref{eq:temperature_dependency}. The Fig.~\ref{fig:distribution_node_feature} (a) shows the gradual change in the temperature over the network, creating a heterogeneous environment with warmer regions on the right side and cooler regions on the left. This variation establishes a temperature gradient in the network which determined the habitat suitability of the species. We first simulate our model till $T=T_{max}$ without dispersal which gives us the local-node specific dynamics of the species driven by $F_i(u_i,\Theta_i)$. Therefore the nodes exhibit different intrinsic growth rates depending on how close their temperature is to the optimal value $\Theta^u_{\mathrm{opt}}$ (cf.~Fig.~\ref{fig:temperature_profile}). Nodes with temperatures close to $\Theta^u_{\mathrm{opt}}$ experience higher growth rate, whereas, nodes with temperatures far from the optimum show reduced growth rates and settle to steady state $u=0$ (cf.~Fig.~\ref{fig:distribution_node_feature}(b)). This creates cluster of nodes with with very high and very low population. We assume random dispersal along the possible corridors in the network, and since species can identify the environmental cue as temperature we assume an advection towards their optimal thermal nodes. This dispersal allows the species to persist beyond their optimal thermal nodes, however with low density and aggregating in few nodes which leads to a heterogeneous spatial pattern with population accumulation in fewer suitable patches (cf.~Fig.~\ref{fig:distribution_node_feature}(c)).

\begin{figure}[ht!]
    \centering
    \subfigure[Distribution of temperature]{\includegraphics[width=0.32\linewidth]{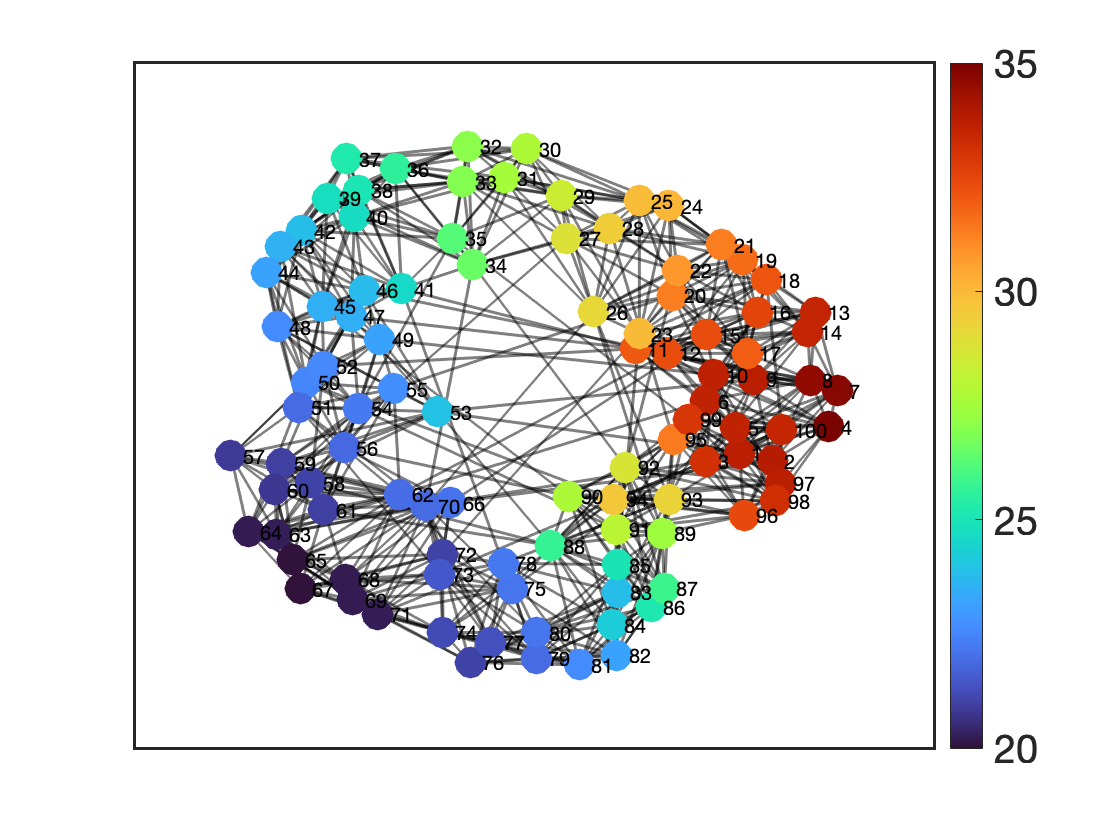}}  
\subfigure[Local dynamics of node]{\includegraphics[width=0.32\linewidth]{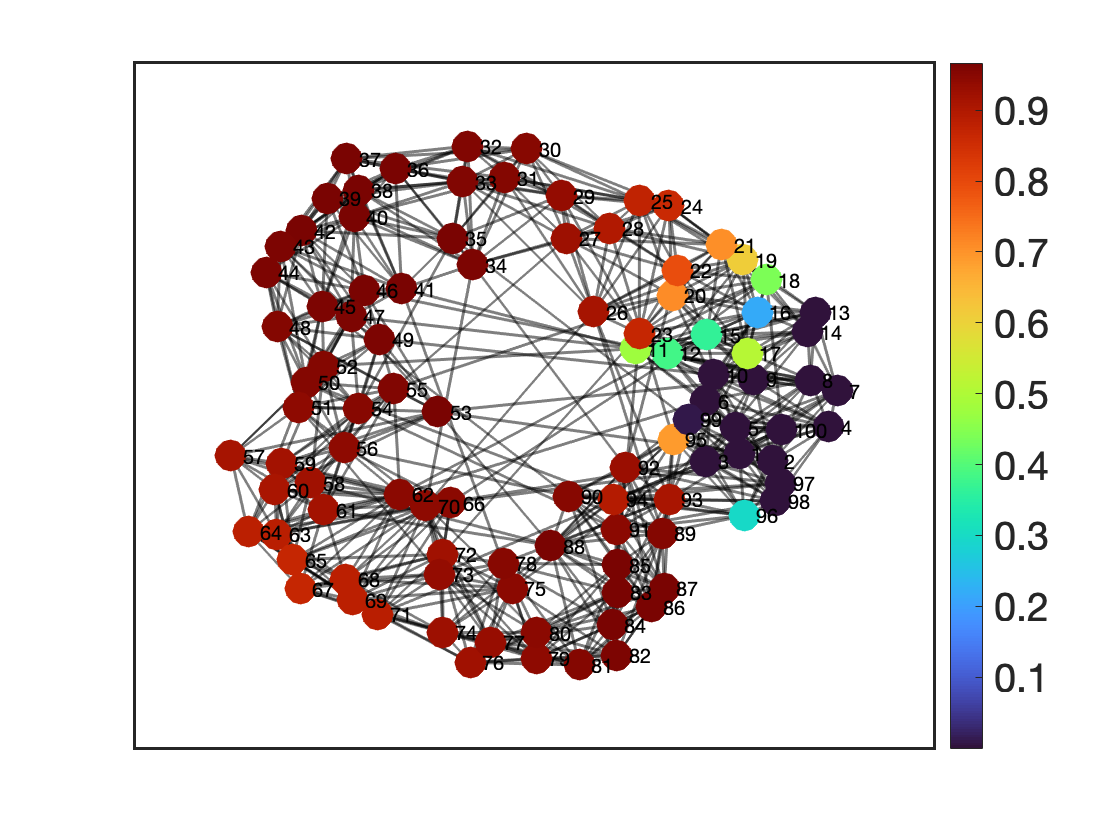}}
    %\subfigure[Diffusion]{\includegraphics[width=0.32\linewidth]%{Figures/one_species_reaction_diffusion_U_distribution.png}}
    \subfigure[Spatial distribution]{\includegraphics[width=0.32\linewidth]{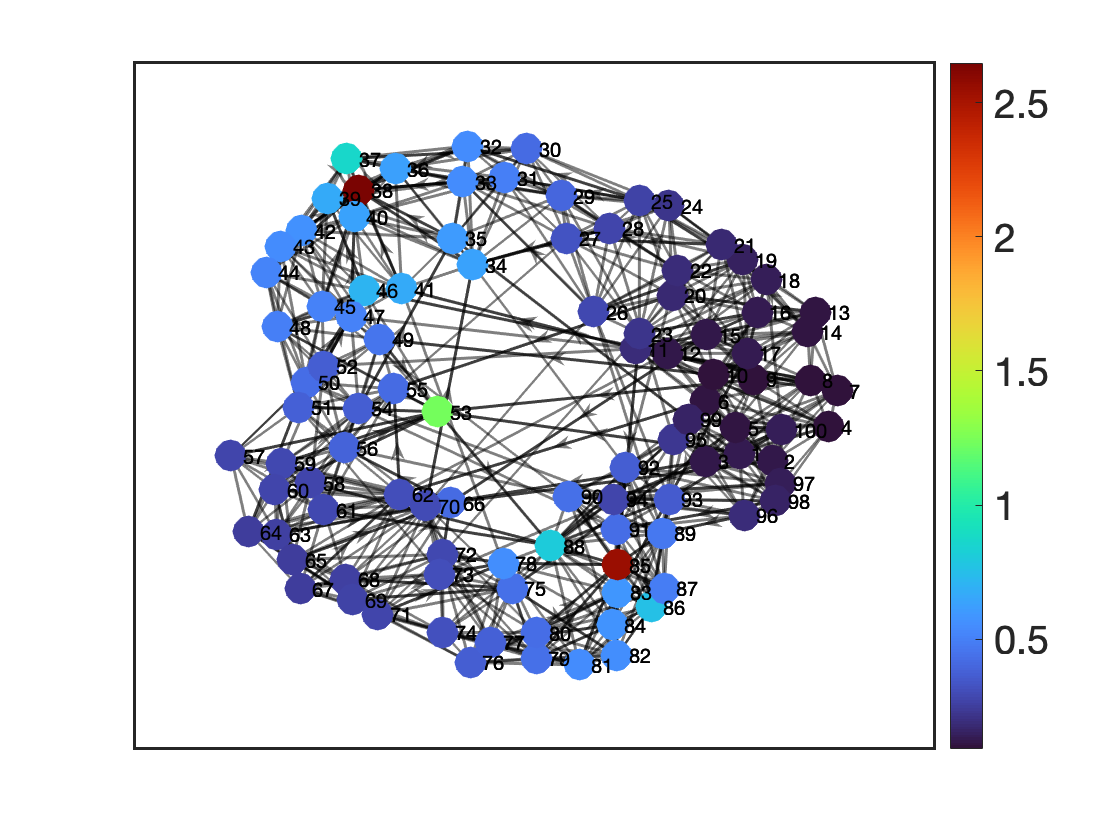}}
    % \subfigure[]{\includegraphics[width=0.32\linewidth]{Figures/one_species_reaction_diff_adv_indegree_u_.png}}

    % \subfigure[Reaction]{\includegraphics[width=0.3\linewidth]{Figures/one_species_reaction_U_vs_t_.png}}
    %  %\subfigure[reaction and diffusion]{\includegraphics[width=0.3\linewidth]{Figures/one_species_reaction_diff_U_vs_t.png}}  
    % \subfigure[reaction and diffusion and avection]{\includegraphics[width=0.3\linewidth]{Figures/one_species_reaction_diff_advec__U_vs_t.png}}

    \caption{(a) Distribution of the temperature over a synthetic Watts-Strogatz network with $N=100,\,K=7,\,\beta=0.1.$ (b) Simulation of the local dynamics without random dispersal and advection (c) The spatial distribution of the population from the warmer to cooler regions with accumulation in fewer optimal nodes due to advection. } %(d) The plot showing the abundance of $u$ with the indegree of each node, showing that optimal nodes with higher indegree accumulates the population and suboptimal nodes with lesser indegree cannot sustain population. 
    \label{fig:distribution_node_feature}
\end{figure}

\subsection{Interplay between advection strength and network topology}

\begin{figure}[ht!]
    \centering
      \includegraphics[width=17cm,height=9cm]{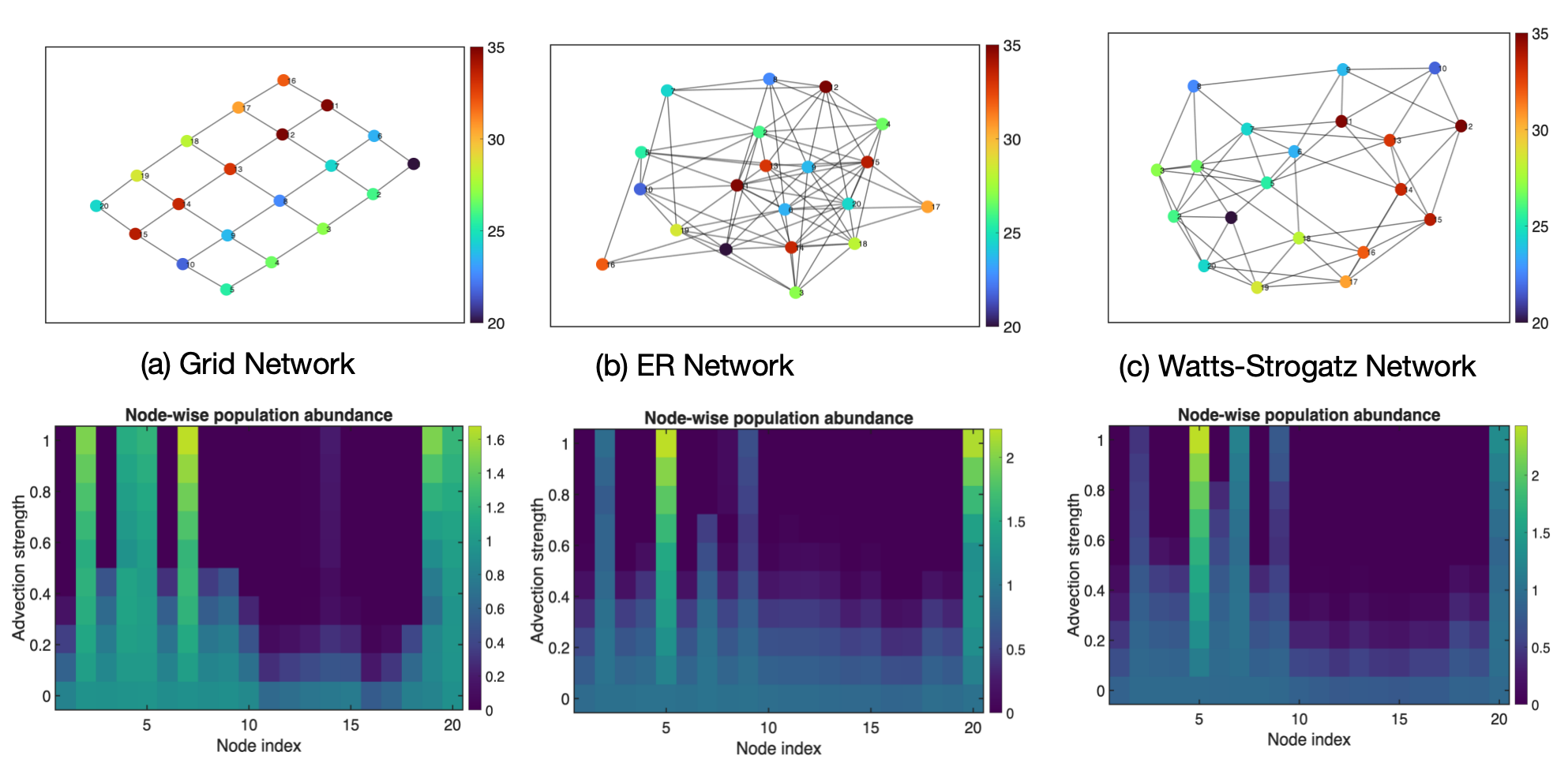}
       \caption{Upper panel: Different network topologies are described along with the temperature distribution in the network; Lower panel: The heatmap showing the population abundance at each node with increasing advection strength $\alpha.$ Strong advection create species hot spot in fewer nodes, whereas, weak advection increases persistence across the network. The temperature of each node is fixed across all three network topologies.}
    \label{fig:advection_strength}
\end{figure}

To investigate how the strength of the directed movement interacts with the spatial structure of the network, we examined the population dynamics under different advection strengths in three distinct network topologies. In the system \eqref{eq:Compact_model} when $\alpha$ is small, species dispersal is dominated by random diffusion. This regime corresponds to the weak advection. Whenever the advection $\alpha$ dominate the random diffusion, we say that the movement is strongly driven by the temperature gradient, hence strong advection. We simulated our model for three different network topologies with N=20 nodes: (a) grid network, (b) Erd\H{o}s--R\'enyi network with $p=0.4$ \cite{erdos1961evolution}, (c) Watts-Strogatz Network with $k=3,\,\beta=0.4.$ Our analysis shows that across all the network topologies considered, increasing the advection strength causes range contraction of population. When the advection strength is weak, the population persists across the entire network, resulting in a relatively homogeneous spatial distribution. This gives a distributed persistence even in the suboptimal and non-optimal nodes. Whereas, for strong advection, individuals were increasingly transported toward nodes with environmental conditions closer to the species' thermal optimum. This leads to aggregation of population in fewer nodes, while most nodes experience local extinction (we set the tolerance $10^{-6})$. Therefore, with increasing strength of advection, the spatial distribution of species shifts from distributed persistence to an aggregated persistence regime. This qualitative mechanism holds true across different network topologies as shown in Fig.~\ref{fig:advection_strength} for a fixed diffusivity coefficient $d=0.3.$ For advection strength $\alpha \in [0,0.4]$ the species distributes even in the non-optimal regions, whereas, higher advection of $\alpha=1$ show aggregated population in fewer nodes of optimal temperature with sufficient network connectivity.

\subsection{Impact of corridor loss on directed movement}

Modelling the directed movement over the network facilitates the movement of species population towards the optimal ecological patch. When the network is intact, these patches support the maximum population. However, due to the increase in anthropogenic activities, there has been a significant loss of ecological corridors over the last few decades \cite{haddad2015habitat}. As a result, the species might be able to efficiently track their optimal habitat patch. Therefore, it is interesting to note how the population is redistributed, if at all, in a network if the link to an optimal or fewer optimal nodes has been destroyed.
\begin{figure}[ht!]
    \centering
    \subfigure[]{\includegraphics[width=8cm,height=5.15cm]{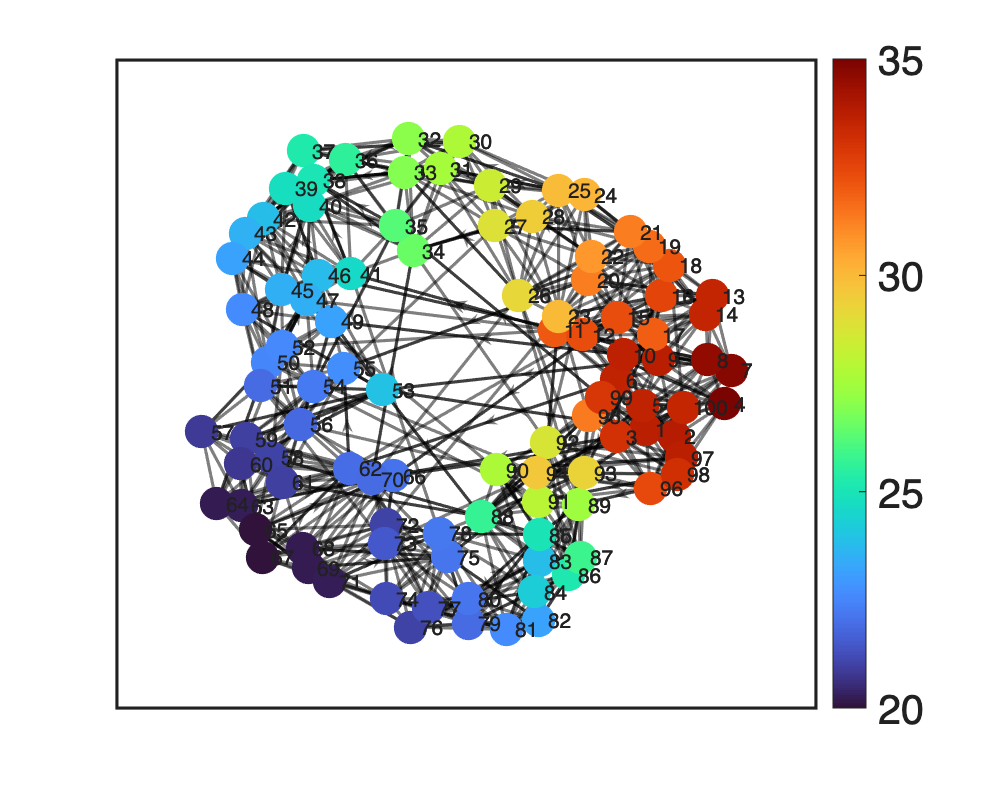}}
    \subfigure[]{\includegraphics[width=7.8cm,height=5.15cm]{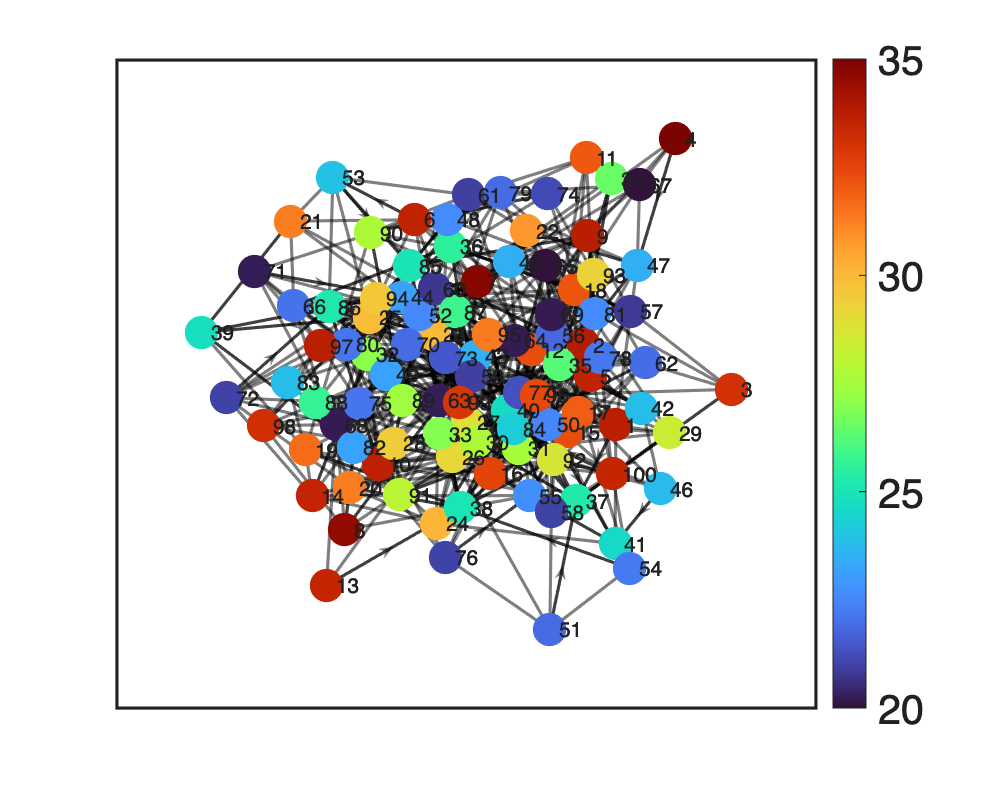}}
    \subfigure[]{
    \includegraphics[width=8cm,height=5.15cm]{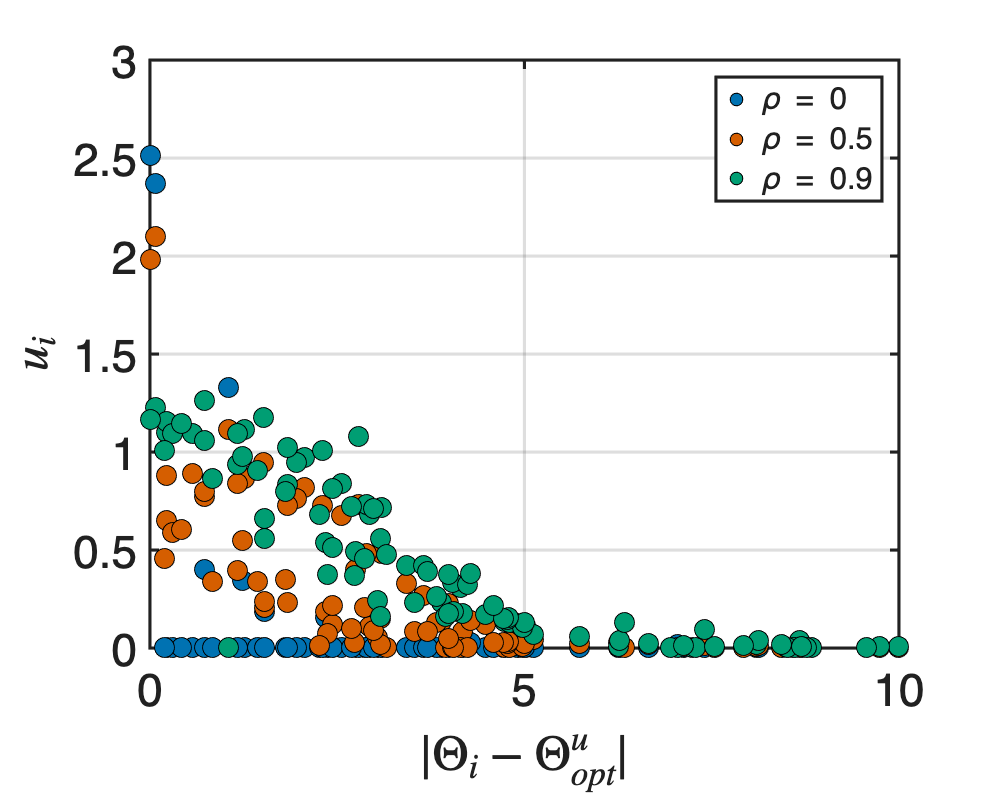}}
    \subfigure[]{
    \includegraphics[width=8cm,height=5.15cm]{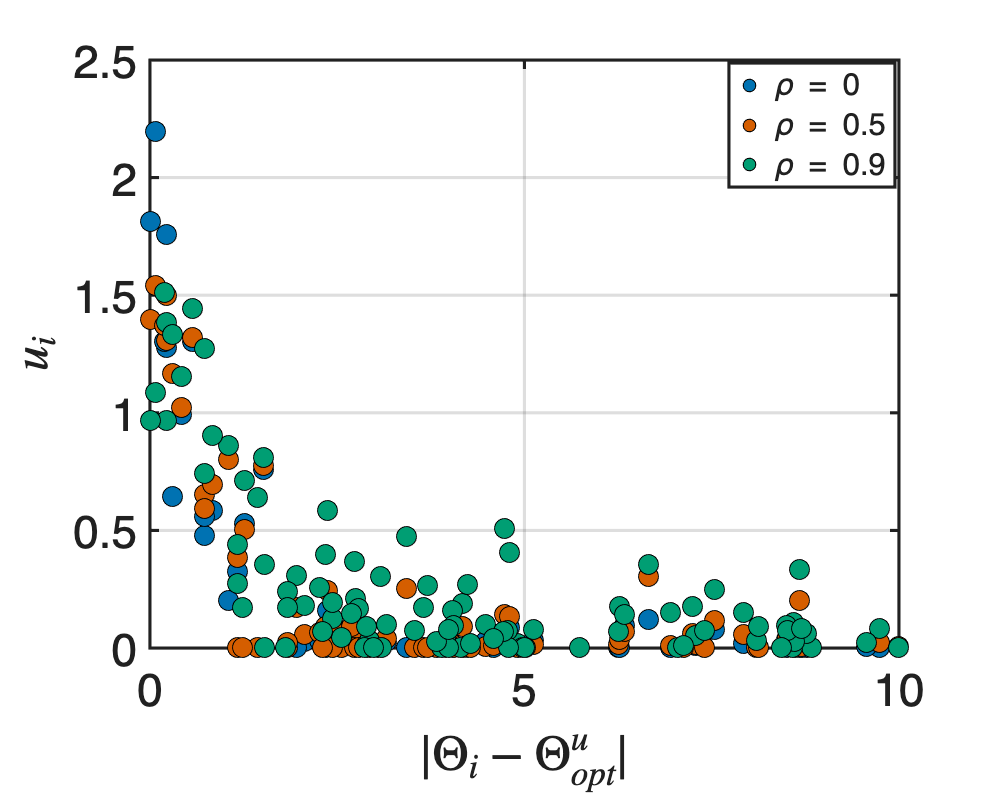}}
    \caption{Distribution of temperature feature on the two network topologies: (a) Watts-Strogatz network with $N=100,\,K=7,\,\beta=0.1,$ (b)  Erd\H{o}s--R\'enyi network with $p=0.1,$ (c,d) Distribution of population in the corresponding network is plotted against the distance from their thermal optima $|\Theta_i-\mathrm{\Theta^u_{opt}}|$ for $\rho=0,\,0.5,\,0.9.$ The distribution indicates that higher $\rho$ forces the population to occupy the neighboring suboptimal habitats.}
    \label{fig:Corridor_loss_experiment}
\end{figure}

To understand the effect, we simulated our model with the same network as discussed in Section \ref{Sec:optimal_niche_prediction} of $N=100$ nodes, and with an ER-network \cite{erdos1961evolution}. The distribution of the temperature is considered as in Fig.~\ref{fig:distribution_node_feature}(a). We define the corridor loss fraction $(\rho)$ as
\begin{equation}
    \rho = \frac{\text{Number of edges removed}}{\text{Total number of edges}},
\end{equation}
implying that $\rho\approx0$ gives the connected network, and $\rho\approx 1$ gives the completely fragmented network. Since for a high advection, we observed that the species accumulate in the best habitable nodes, or the nodes which are closest to their thermal optimum. In our simulation, we sequentially removed $\rho$ edges connecting to the optimal nodes. This will allow us to study the impact of loss of corridors connected to the optimal habitable nodes. For an equal diffusivity and advection strength, the net dispersal for $\rho=0$ is determined by $\mathrm{M}=d (\mathrm{L}
+\mathrm{A_u^{adv}(\Theta)}).$ When the directed edges ($i\rightarrow j$) towards the optimal node ($j$) are removed sequentially, the entries of the matrix $\mathrm{L_{ij}}=\mathrm{L_{ji}}=0$ and $\mathrm{\Tilde{a}_{ij}}=0.$ Therefore, the growth in the optimal nodes persists majorly from the local dynamics $F(u_i,\Theta_i).$ Therefore, even if the node becomes isolated for $\rho \approx 1,$ the population still persists locally, however, at a lower abundance. Further, the neighboring nodes of $j\,(N(j)),$ cannot disperse and stays there even if the environmental conditions are unfavorable. This implies that the suboptimal neighboring nodes $N(j)$ of the optimal node $j$ have higher population density. Depending on the number of optimal nodes the network has, and the number of edges towards the optimal nodes are removed the population tends to accumulate in the neighboring nodes. This shows the spread of population in the nodes much farther from their optimal regime. Fig.~\ref{fig:Corridor_loss_experiment} shows the result for two different network and different fractions of corridor loss $(\rho).$ We observe in both the network topology that species abundance is concentrated in the optimal nodes for $\rho=0,$ where $|\Theta_i-\mathrm{\Theta^u_{opt}}|\approx 0.$ In contrast, for $\rho=0.5,\,0.9$ the peak abundance though observed in the optimal nodes, but much lesser than for $\rho=0$ and the population is seen to spread till $|\Theta_i-\mathrm{\Theta^u_{opt}}|\approx 5.$ This abundance is dominantly from the local dynamics of the neighboring nodes. These results indicate that habitat fragmentation does not cause immediate extinction, rather, forces populations to occupy a broader range of environmental conditions.

\section{Discussion}

In literature, the directional movement of species has been studied mostly in the form of weighted diffusion \cite{prima2018combining}. However, such approaches fundamentally represent biased random dispersal and do not explicitly capture flow-driven movement. In this work, we developed a graph-based reaction-diffusion-advection framework that explicitly incorporates the directional movement of species across heterogeneous landscapes. The heterogeneity in the landscape is mediated through the environmental gradient between the nodes in the network. By embedding environmentally driven flows into a network structure, the model moves beyond traditional diffusion-based approaches and captures a key, yet underexplored, mechanism of biased dispersal along environmental gradients.

The empirical evidences of species responding to environmental cues \cite{chen2011rapid,lenoir2020species}, and particularly to climate warming motivated us to develop this formulation. By representing the patchy landscape as a network, the directional flow of the species are modelled based on the temperature gradient across the network. A key finding of this work is that, unlike purely diffusive movement which homogenizes population across the network, advection induces asymmetry forcing soure-sink like dynamics \cite{bisschop2019transient} and thus alters the spatial distribution patterns. The persistence of species in a particular patch is not solely determined by local patch quality, but by the ecological corridors connecting the neighboring patches. Our advection formulation efficiently captures the dispersal of species from warmer to cooler regions, thus creating clusters of nodes with high and low population densities (cf.~Fig.~\ref{fig:distribution_node_feature}). This confirms that species always migrate to patches where the temperature conditions are closest to their thermal niche. Thus creating a heterogeneous spatial pattern on the network with more accumulation of species in fewer optimal patches. This has direct implications for identifying critical patches for biodiversity conservation especially in fragmented landscapes.

Our analysis highlights the interaction between network topology and advective strength. Strong advection can arise during the extreme events like heatwaves, high precipitation, floods, etc, which forces a rapid redistribution of the population across the landscape. By simulating our model for various network topologies we confirm that indeed the population is driven towards fewer optimal nodes, thereby, increasing the risk of local extinction (cf.~Fig.~\ref{fig:advection_strength}). In such contexts, advection becomes the dominant mechanism shaping species distribution and persistence. In a fully connected network, species can efficiently migrate toward their thermally optimal patch through ecological corridors. However, with an increased destruction in ecological corridors, the ability of species to track their optimal thermal niches becomes increasingly constrained. Our simulation support this hypothesis and show that with increasing loss of corridors, the species do not exhibit immediate extinction, instead, they are forced to occupy patches with temperature farther from their optimal regime. Notably, the population survive in these non-optimal patches with reduced density. This persistence may indicate the adaptability of species to a new thermal regime \cite{bennett2019integrating}.    

From a mechanistic perspective, the explicit formulation of advection on graphs provides a bridge between continuous PDE based diffusion-advection models and discrete network representations of landscapes. To the best of our knowledge, an explicit formulation of advection within ecological networks remains largely unexplored.  It would be useful for  studying a wide range of ecological processes, including range shifts, invasion dynamics, and climate-driven redistribution, particularly, when integrated with high-resolution environmental and movement data. Moreover, it opens up several questions for further research. For instance, integrating the proposed advection framework with the existing species distribution models can provide a more mechanistic basis for predicting species distributions under changing climatic conditions.

\section*{Acknowledgments}
P.R.C acknowledges the Axis Bank Centre for Mathematics and Computing for the financial assistance to carry out this research work in the Department of Computational and Data Sciences, Indian Institute of Science, Bengaluru.

\section*{Conflict of interest}
The authors declare that there are no conflicts of interest.

\section*{Data availability}
The code supporting the findings of this study will be made publicly available upon publication.

\bibliographystyle{unsrt}
\bibliography{References}

\end{document}